\documentclass[10pt]{article}
\usepackage[english]{babel}
\usepackage[utf8]{inputenc}
\usepackage[OT1]{fontenc}
\usepackage{amsfonts, amsmath, amsthm, amssymb}
\usepackage{graphicx}
\usepackage{listings}
\usepackage{hyperref}
\usepackage[margin=1in]{geometry}
\usepackage{xcolor}
\usepackage{algorithm}
\usepackage{algorithmic}

\newtheorem{theorem}{Theorem}[section]
\newtheorem{corollary}{Corollary}[theorem]
\newtheorem{lemma}[theorem]{Lemma}
\theoremstyle{remark}
\newtheorem*{remark}{Remark}

\theoremstyle{definition}
\newtheorem{definition}{Definition}[section]
\newcounter{countCode}
\lstnewenvironment{code} [1][caption=Ponme caption, label=default]{%
	 
	\setcounter{lstlisting}{\value{countCode}} 
	\lstset{ %
	language=python,
	basicstyle=\ttfamily\footnotesize,       % the size of the fonts that are used for the code
	numbers=left,                   % where to put the line-numbers
	numberstyle=\sc,      % the size of the fonts that are used for the line-numbers
	stepnumber=1,                   % the step between two line-numbers. 
	numbersep=5pt,                 % how far the line-numbers are from the code
	numberstyle=\color{red!50!blue},
    	backgroundcolor=\color{white},
	rulecolor=\color{white},
	keywordstyle=\color{red}\bfseries,
	showspaces=false,               % show spaces adding particular underscores
	showstringspaces=false,         % underline spaces within strings
	showtabs=false,                 % show tabs within strings adding particular underscores
	frame=single,                   % adds a frame around the code
	framexleftmargin=0mm,
	numberblanklines=false,
	xleftmargin=5pt,
	breaklines=true,
	breakatwhitespace=true,
	breakautoindent=true,
	captionpos=t,
	texcl=true,
	tabsize=2,                     
	extendedchars=true,
	inputencoding=utf8, 
	escapechar=\%,
	morekeywords={print, println, size, background, strokeWeight, fill, line, rect, ellipse, triangle, arc, save, PI, HALF_PI, QUARTER_PI, TAU, TWO_PI, width, height,},
	emph=[1]{print,println,}, emphstyle=[1]{\color{blue}}, 
	emph=[2]{width,height,}, emphstyle=[2]{\bf\color{violet}}, 
	emph=[3]{PI, HALF_PI, QUARTER_PI, TAU, TWO_PI}, emphstyle=[3]\color{orange!50!violet},
	emph=[4]{line, rect, ellipse, triangle, arc,}, emphstyle=[4]\color{green!70!black},
	#1}}{\addtocounter{countCode}{1}}

\title{String representation of trivalent 2-stratifolds with trivial fundamental group}
\author{Myriam Hernández-Ketchul \hspace{1cm} Jesús Rodríguez-Viorato}
\date{June 22nd, 2020}
\begin{document}
\maketitle \tableofcontents 
\begin{abstract}
We give a Python program that is capable to compute and print all the distinct trivalent 2-stratifold graphs up to $N$ white vertices with trivial fundamental group (see \cite{art:class}). Our algorithm uses the three basic operations described in \cite{art:Models} to construct new graphs from any set of given graphs. We iterate this process to construct all the desired graphs.  The algorithm includes an optimization that reduces the repetition of generated graphs, this is done by recognizing equivalent white vertices of a graph under automorphism. We use a variation of the AHU algorithm to identify those vertices and as well to distinguish isomorphic graphs in linear time. The returned string from the AHU algorithm is also used as a hashing function to search for repeated graphs in amortized constant time.

\end{abstract}

\section{Introduction}

Since 2016, the authors José Carlos Gómez-Larrañaga, Francisco González-Acuña, and Wolfgang Heil have studied 2-stratifolds \cite{art:2-stratifold}. There are multiple motivations to follow their work, but one of the most interesting is the applications of the 2-stratifolds on the field of Topological Data Analysis.\\ 

The classification of 2-manifolds has been well studied, but the study of the 2-stratifolds has just started. On \cite{art:Models}, Gómez-Larrañaga, González-Acuña, and Wolfgang started to analyze the ones with trivial fundamental group, giving a process to build them. And continuing with that work, it's easy to ask how many of those exist. 

Moreover, using the operations described in their work, we started to wonder if there was possible to use them to get every 2-stratifold. Yair Hernández, wrote a computational algorithm \cite{StratModule} to build some of them, and based on that code, we decided to extend it to get the 2-stratifolds meticulously; in an optimized way. \\

Since the operations described on \cite{art:Models} are performed on white vertices, it was decided to use the number of white vertices as a parameter for the algorithm, limiting the number of graphs that it was going to build. Therefore our program can calculate and draw the graph representation defined in \cite{art:Models} of every 2-stratifold with trivial fundamental group up to $N$ white vertices.

The code was written on Python and can be consulted on \cite{cod:TSR}. In order to identify different graphs, it used a modified version of the AHU algorithm. And to optimize the algorithm, there is defined an identification among white vertices under automorphisms of the graph (see def. \ref{def:symmetry}). The modified version of the AHU algorithm, the algorithm \ref{AHU_modified},  allows us to identify every graph with a unique string, which is the string representation, that is used as a hash to discard the repeated graphs and optimize the building process.\\

The contents of this paper include a description of the algorithm previously mentioned and the proof that it actually builds all the graphs that it is asked to. In section 2, there are given basic graph theory definitions, that are necessary to understand the proves. In section 3, there are proven some known results of graph isomorphisms but applied to the graph representations of the 2-stratifolds. In the next section,``Characterizing weighted trees with a string", we describe the modified version of the AHU algorithm and we explain how it is helpful to solve the problem of classifying the trivalent graphs and therefore the 2-stratifolds with trivial fundamental group. In the section  ``The graph generator algorithm", we describe the main algorithm that assigns a string representation to every graph, and we give the results of the construction of the graphs up to 11 white vertices, including a comparison between the optimized and non-optimized version. Finally, in ``Nomenclature" we explained the tag that is given to every graph at the end of the algorithm because although the string representation is unique, it doesn't work as a quick resume of the general shape of the graph.

\section{Preamble}
    \begin{definition}\label{def:trivalent_graph}
     We will say that a graph generated by a simply-connected trivalent 2-dimensional stratifold is a \textbf{trivalent graph}.
    \end{definition}

    \begin{definition}\label{def:path}
    Let be $G$ a graph, a set of vertices $\{v_0, v_1, ..., v_n\} \subset G$ such that $v_i$ is adjacent to $v_{i+1}$, and $v_i \neq v_j$ for every $0\leq i, j \leq n-1$ is called a \textbf{path} from $v_0$ to $v_n$ in $G$.
    \end{definition}

    \begin{definition}\label{def:degree_leaf}
    The \textbf{degree of a vertex} $v$ in a graph $G$ is the number of vertices in $G$ that are adjacent to $v$. The set of all the vertices that are adjacent to $v$ are the \textbf{neighbors of $v$} and we denote them by $N(v)$. A vertex of degree 1 is a \textbf{leaf}.
    \end{definition}
        
    \begin{definition}\label{def:length_of_path}
    Given a $u-v$ path $R$ in $G$ the \textbf{length} of the path is the sum of the weights of the edges encountered in $R$. For two vertices $u$ and $v$, the \textbf{shortest} path is the $u-v$ path with minimum length, and the \textbf{largest} path is the $u-v$ path with maximum length. The \textbf{length (without weights)} of a path is the length of the path considering that all the weights are 1.
    \end{definition}
    
    \begin{definition}\label{def:central_vertex_center}
    Let $v$ be a vertex of a graph $G$, its \textbf{eccentricity} $e(v)$ is the length (without weights) of the largest path from $v$ to another vertex in $G$. The \textbf{radius} of $G$, $rad(G)$, is the smallest eccentricity among the vertices of $G$. For any vertex $v$, such that $e(v) = rad(G)$ we say that $v$ is a \textbf{central vertex} of $G$, if $G$ has only one central vertex $u$, we say that $u$ is the \textbf{center} of $G$. 
    \end{definition}
    
    \begin{definition}\label{def:tree_rooted_tree}
    Given a graph $G$, we say that $G$ is a \textbf{tree} if and only if for every $v, w$ vertices of $G$, there exists a path from $v$ to $w$ and there is no path with positive length from $v$ to itself. A \textbf{rooted tree} is a tree with a special vertex identified as the \textbf{root}. 
    \end{definition}
    
    \begin{definition}\label{def:child_vertex}
    On a rooted tree, a vertex $v$ is a \textbf{child} of vertex $w$ if, in the path from the root to $v$, $v$ immediately succeeds $w$. We said that $w$ is \textbf{parent} of $v$ if and only if $v$ is child of $w$.
    \end{definition}

\section{Isomorphic graphs}\label{sec:isomorphism}
    
    \begin{definition}\label{def:iso}
    Two weighted graphs $G$ and $H$ are \textbf{isomorphic}  if there exists a bijective function $\phi: V(G) \to V(H)$ such that two vertices $u$ and $v$ are adjacent in $G$ if and only if $\phi(u)$ and $\phi(v)$ are adjacent in $H$, and for every edge, $u-v$ in $G$, the edge $\phi(u)-\phi(v)$ in $H$ has the same weight as $u-v$. If there is no such function $\phi$ as described above, the $G$ and $H$ are \textbf{non-isomorphic graphs}. (Definition from \cite{book:GraphT}) Moreover, for $G$ and $H$ rooted trees with roots $r_G, r_H$, respectively, we say that $G$ and $H$ are \textbf{isomorphic as rooted trees} if they are isomorphic and $\phi(r_G) = r_H$.
    \end{definition}
    
    \begin{lemma}\label{lemma:leaves_if_iso}
    If two graphs $G$ and $H$ are isomorphic with a function $\phi: V(G) \to V(H)$, for $u$ vertex of $G$, $u$ is a leaf of $G$ if and only if $\phi(u)$ is a leaf of $H$.
    \end{lemma}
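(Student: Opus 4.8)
The plan is to reduce the statement to the preservation of vertex degree under an isomorphism, since by Definition~\ref{def:degree_leaf} a vertex is a leaf precisely when it has degree $1$. So the core claim to establish is that $\deg_G(u) = \deg_H(\phi(u))$ for every vertex $u$ of $G$.

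First I would show that $\phi$ restricts to a map $N(u) \to N(\phi(u))$. Indeed, if $v \in N(u)$, then $v$ is adjacent to $u$ in $G$, so by the adjacency-preserving property in Definition~\ref{def:iso}, $\phi(v)$ is adjacent to $\phi(u)$ in $H$, i.e. $\phi(v) \in N(\phi(u))$. Next I would check this restricted map is injective, which is immediate because $\phi$ itself is injective. Then I would check surjectivity onto $N(\phi(u))$: given $w \in N(\phi(u))$, since $\phi$ is surjective there is $v \in V(G)$ with $\phi(v) = w$; now $\phi(v)$ is adjacent to $\phi(u)$ in $H$, so by the ``only if'' direction of the adjacency condition in Definition~\ref{def:iso}, $v$ is adjacent to $u$ in $G$, i.e. $v \in N(u)$. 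Hence $\phi$ maps $N(u)$ bijectively onto $N(\phi(u))$, giving $\deg_G(u) = |N(u)| = |N(\phi(u))| = \deg_H(\phi(u))$.

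With the degree equality in hand, the lemma follows directly: $u$ is a leaf of $G$ iff $\deg_G(u) = 1$ iff $\deg_H(\phi(u)) = 1$ iff $\phi(u)$ is a leaf of $H$.

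I do not anticipate a genuine obstacle here; the only point that needs a little care is to use the biconditional form of the adjacency condition (not just one implication) so that the neighborhood map is onto, and to keep the bijectivity of $\phi$ explicitly in play for injectivity. The weight condition in Definition~\ref{def:iso} plays no role and can be ignored.
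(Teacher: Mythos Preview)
Your proposal is correct and follows essentially the same approach as the paper: both use the biconditional adjacency condition and bijectivity of $\phi$ to show that $\phi$ carries $N(u)$ bijectively onto $N(\phi(u))$. The only cosmetic difference is that you prove the general degree-preservation statement $\deg_G(u)=\deg_H(\phi(u))$ and specialize to degree~$1$, whereas the paper argues the degree-$1$ case directly.
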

    \begin{proof}
    Let $u$ a leaf of $G$, then by definition, there exists only one vertex adjacent to $u$ in $G$, suppose $v$. As $u$ and $v$ are adjacent in $G$, $\phi(u)$ and $\phi(v)$ are adjacent in $H$. For any other $w$ vertex in $G$, different from $u$ or $v$; $w$ is not adjacent to $u$ in $G$ and therefore $\phi(w)$ is not adjacent to $\phi(u)$ in $H$. Then, $\phi(v)$ is the only vertex adjacent to $\phi(u)$ in $H$, concluding that $\phi(u)$ must be a leaf of $H$. Analogously if $\phi(u)$ is a leaf of $H$ we can conclude that $u$ must be a leaf of $G$.
    \end{proof}
    \begin{corollary}\label{cor:num_leaves}
    If two graphs $G$ and $H$ are isomorphic, then they have the same number of leaves.
    \end{corollary}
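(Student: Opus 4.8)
The plan is to leverage Lemma~\ref{lemma:leaves_if_iso} directly: an isomorphism $\phi\colon V(G)\to V(H)$ is by Definition~\ref{def:iso} a bijection on all vertices, and the lemma says it respects the property of being a leaf in both directions. So I would first introduce notation, letting $L(G)$ and $L(H)$ denote the sets of leaves of $G$ and of $H$ respectively, fix an isomorphism $\phi$ guaranteed by hypothesis, and consider the restriction $\phi|_{L(G)}$.

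Next I would verify that $\phi|_{L(G)}$ is a well-defined bijection $L(G)\to L(H)$. That it maps into $L(H)$ is exactly the ``only if'' direction of Lemma~\ref{lemma:leaves_if_iso}, giving $\phi(L(G))\subseteq L(H)$. Injectivity of $\phi|_{L(G)}$ is immediate since $\phi$ itself is injective. For surjectivity onto $L(H)$, take any $w\in L(H)$; since $\phi$ is a bijection there is a unique $u\in V(G)$ with $\phi(u)=w$, and the ``if'' direction of the lemma forces $u\in L(G)$. Hence $\phi|_{L(G)}\colon L(G)\to L(H)$ is a bijection, so $|L(G)|=|L(H)|$, which is the assertion.

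This argument is entirely routine and I do not anticipate any real obstacle. The only point that merits a moment's care is the observation that restricting a bijection to a subset of the domain on which a property is preserved in \emph{both} directions yields a bijection onto the corresponding subset of the codomain; but this is precisely what the biconditional in Lemma~\ref{lemma:leaves_if_iso} supplies, so the corollary follows in a couple of lines.
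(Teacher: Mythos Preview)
Your proposal is correct and follows exactly the intended route: the paper states Corollary~\ref{cor:num_leaves} without proof, regarding it as immediate from Lemma~\ref{lemma:leaves_if_iso}, and the argument you spell out---restricting the bijection $\phi$ to $L(G)$ and using both directions of the biconditional to get a bijection onto $L(H)$---is precisely the routine verification the paper leaves implicit.
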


    \begin{definition}\label{def:iso_tg}
    Two trivalent graphs $G$ and $H$ are \textbf{isomorphic as trivalent graphs} if there is an  isomorphism $\phi$ as weighted graphs such that, if $B(G), B(H)$ are the sets of black vertices of $G, H$, and $W(G), W(H)$ are the sets of white vertices of $G, H$, the functions $\phi\restriction_{B(G)}: B(G) \to B(H)$ and $\phi\restriction_{W(G)}: W(G) \to W(H)$ are bijective.
    \end{definition}
    
    \begin{lemma}\label{lemma:num_vertex}
    If two trivalent graphs $G$ and $H$ are isomorphic, then they have the same number of white and black vertices and the same number of leaves.
    \end{lemma}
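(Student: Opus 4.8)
The plan is to read off the statement almost directly from Definition \ref{def:iso_tg}, combined with Corollary \ref{cor:num_leaves}. First I would unpack what it means for the trivalent graphs $G$ and $H$ to be isomorphic: by Definition \ref{def:iso_tg} there is a map $\phi\colon V(G)\to V(H)$ that is an isomorphism of weighted graphs and that, moreover, restricts to bijections $\phi\restriction_{B(G)}\colon B(G)\to B(H)$ and $\phi\restriction_{W(G)}\colon W(G)\to W(H)$ between the black vertex sets and between the white vertex sets, respectively.

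From the existence of the bijection $\phi\restriction_{B(G)}$ I would conclude $|B(G)|=|B(H)|$, and from $\phi\restriction_{W(G)}$ that $|W(G)|=|W(H)|$; this is just the fact that a bijection between finite sets forces them to have the same cardinality, so no real work is needed here. For the count of leaves I would observe that the same $\phi$ is, in particular, an isomorphism of weighted graphs, hence an isomorphism of the underlying graphs in the sense of Definition \ref{def:iso}. Therefore Corollary \ref{cor:num_leaves} applies verbatim and gives that $G$ and $H$ have the same number of leaves.

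I do not anticipate a genuine obstacle: the lemma is essentially a bookkeeping consequence of the definitions and of the already-established Corollary \ref{cor:num_leaves}. The only point that deserves an explicit sentence is the compatibility of the hypotheses — namely that "isomorphic as trivalent graphs" is strictly stronger than "isomorphic as weighted graphs", which is immediate from Definition \ref{def:iso_tg} since the witnessing $\phi$ is required to be a weighted-graph isomorphism to begin with — so that invoking Corollary \ref{cor:num_leaves} is legitimate. I would keep the write-up to a few lines.
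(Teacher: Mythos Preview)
Your proposal is correct and mirrors the paper's own proof essentially line for line: unpack Definition~\ref{def:iso_tg} to obtain the bijections $\phi\restriction_{B(G)}$ and $\phi\restriction_{W(G)}$, deduce $|B(G)|=|B(H)|$ and $|W(G)|=|W(H)|$, and then invoke Corollary~\ref{cor:num_leaves} for the leaf count. There is nothing to add or change.
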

    \begin{proof}
    If $G$ and $H$ are two trivalent graphs that are isomorphic, there exists $\phi: V(G) \to V(H)$ such that $\phi\restriction_{B(G)}: B(G) \to B(H)$ and $\phi\restriction_{W(G)}: W(G) \to W(H)$  are bijective, therefore $|B(G)| = |B(H)|$ and $|W(G)| = |W(H)|$. And by corollary \ref{cor:num_leaves},  $G$ and $H$ have the same number of leaves.
    \end{proof}
    
\begin{theorem}[Theorem 2.7 in \cite{book:GraphT}]\label{thm:unique_path}
There exists a unique path between any two vertices of a tree.
\end{theorem}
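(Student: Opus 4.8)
The statement splits into existence and uniqueness. Existence is built into Definition \ref{def:tree_rooted_tree}: in a tree there is, by hypothesis, a path between any two vertices, so nothing needs to be done there. The whole argument therefore concerns uniqueness, and the plan is to extract it from the other clause of the tree definition, namely that no vertex is joined to itself by a path of positive length.

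I would proceed by contradiction. Suppose two vertices $u$ and $v$ are joined by two distinct paths $P=(u=u_0,u_1,\dots,u_k=v)$ and $Q=(u=x_0,x_1,\dots,x_\ell=v)$. If $u=v$ the contradiction is immediate, since then at least one of $P,Q$ is a path of positive length from $u$ to itself; so assume $u\neq v$, hence $k,\ell\geq 1$, and (reading Definition \ref{def:path} in the natural way) all vertices along each of $P$ and $Q$ are distinct. The core of the proof is a reduction to sub-paths that meet only at their endpoints: let $a$ be the last index at which the two paths agree, so $u_i=x_i$ for $i\leq a$ while $u_{a+1}\neq x_{a+1}$ — such an $a$ exists because $P\neq Q$, and $a<\min(k,\ell)$ because neither path revisits a vertex — and then let $u_b$ with $b>a$ be the first vertex of the tail $u_a,u_{a+1},\dots,u_k$ that also lies on the tail $x_a,x_{a+1},\dots,x_\ell$, say $u_b=x_c$; such a $b$ exists because both tails terminate at $v$.

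Then the sub-paths $P'=(u_a,u_{a+1},\dots,u_b)$ and $Q'=(x_a,x_{a+1},\dots,x_c)$ have the same two endpoints $u_a=x_a$ and $u_b=x_c$, are internally disjoint by the choice of $b$, and are distinct (they already differ one step past $u_a$). Concatenating $P'$ with the reverse of $Q'$ yields the closed sequence $u_a,u_{a+1},\dots,u_b=x_c,x_{c-1},\dots,x_{a+1},x_a=u_a$. Since each of $P',Q'$ is a path and their only common vertices are the endpoints $u_a$ and $u_b=x_c$, every vertex in this sequence is distinct except for the coincidence of the first and last (both equal to $u_a$); hence it is a path from $u_a$ to itself in the sense of Definition \ref{def:path}. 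It contains at least one edge and the weights are positive, so it has positive length, contradicting Definition \ref{def:tree_rooted_tree}. This contradiction shows the path between $u$ and $v$ is unique.

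I expect the only real care to lie in this reduction: checking that $P'$ and $Q'$ can be chosen internally disjoint and that their concatenation genuinely meets the ``no repeated intermediate vertex'' requirement of Definition \ref{def:path}, so that it is exactly the object forbidden by the tree axiom; everything else follows directly from the definitions. A slicker but essentially equivalent alternative is to pick a counterexample pair minimizing $k+\ell$ and argue that such a minimal pair must already be internally disjoint, since splitting at a shared internal vertex would produce a strictly smaller counterexample. I would mention this in passing but carry out the divergence--reconvergence version, as it is self-contained and constructive.
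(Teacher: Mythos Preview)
The paper does not supply a proof of this statement at all: it is quoted as Theorem~2.7 of the cited graph-theory textbook and used as a black box, so there is no ``paper's own proof'' to compare against. Your argument is the standard one and is correct in the paper's framework: existence is indeed part of Definition~\ref{def:tree_rooted_tree}, and your divergence--reconvergence construction produces a closed walk $u_a,\dots,u_b=x_c,\dots,x_a=u_a$ whose internal vertices are pairwise distinct, which is exactly a ``path of positive length from $u_a$ to itself'' in the sense of Definitions~\ref{def:path} and~\ref{def:length_of_path} (edge labels here are $1$ or $2$, hence positive), contradicting the tree axiom. The only place to be a little careful is that Definition~\ref{def:path} only demands $v_i\neq v_j$ for $0\le i,j\le n-1$, so your parenthetical ``reading Definition~\ref{def:path} in the natural way'' slightly overstates what is given; however, your argument never actually needs $v_n$ to be distinct from the earlier vertices, and the cycle you build satisfies the literal requirement, so this does not affect the validity.
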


    \begin{lemma}\label{lemma:same_length_paths}
    For $G$ and $H$ two isomorphic trees with isomorphism function $\phi$, for every two vertices $u$, $v$ in $G$, the length of the $u-v$ path in $G$ is the same as the length of the $\phi(u)-\phi(v)$ path in $H$.
    \end{lemma}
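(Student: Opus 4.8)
The plan is to reduce everything to Theorem~\ref{thm:unique_path} together with the fact that a weighted-graph isomorphism preserves both adjacency and edge weights (Definition~\ref{def:iso}). First I would invoke Theorem~\ref{thm:unique_path} to fix the \emph{unique} $u$-$v$ path in $G$, say $R = \{v_0, v_1, \dots, v_n\}$ with $v_0 = u$ and $v_n = v$, so that by Definition~\ref{def:length_of_path} the length of the $u$-$v$ path in $G$ is $\sum_{i=0}^{n-1} w_G(v_i v_{i+1})$, where $w_G$ denotes the weight of an edge of $G$.

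Next I would show that $\phi(R) = \{\phi(v_0), \phi(v_1), \dots, \phi(v_n)\}$ is a path from $\phi(u)$ to $\phi(v)$ in $H$. Adjacency of consecutive vertices: since $v_i$ and $v_{i+1}$ are adjacent in $G$ and $\phi$ is an isomorphism, $\phi(v_i)$ and $\phi(v_{i+1})$ are adjacent in $H$. Distinctness: since the $v_i$ are pairwise distinct (as required in Definition~\ref{def:path}) and $\phi$ is injective, the $\phi(v_i)$ are pairwise distinct as well. Hence $\phi(R)$ satisfies Definition~\ref{def:path} and is a $\phi(u)$-$\phi(v)$ path in $H$. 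Because $H$ is a tree, Theorem~\ref{thm:unique_path} applies again and forces $\phi(R)$ to be \emph{the} $\phi(u)$-$\phi(v)$ path in $H$, so its length is $\sum_{i=0}^{n-1} w_H(\phi(v_i)\phi(v_{i+1}))$.

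Finally I would compare the two sums term by term: by the weight-preserving clause of Definition~\ref{def:iso}, for each $i$ the edge $\phi(v_i)$-$\phi(v_{i+1})$ in $H$ has the same weight as the edge $v_i$-$v_{i+1}$ in $G$, i.e. $w_H(\phi(v_i)\phi(v_{i+1})) = w_G(v_i v_{i+1})$. Summing over $i = 0, \dots, n-1$ gives equality of the two path lengths, which is exactly the claim.

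I do not expect a genuine obstacle here; the only point requiring a little care is the verification that the image of a path is again a path (preservation of adjacency together with non-repetition of vertices), and that it is the unique such path in $H$ so that it is legitimate to read off its length — but both are immediate consequences of $\phi$ being a bijection that respects adjacency in both directions, combined with Theorem~\ref{thm:unique_path}. One could also remark, if desired, that $H$ being a tree is part of the hypothesis, so no separate argument that ``isomorphic to a tree implies tree'' is needed.
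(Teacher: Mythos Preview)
Your proposal is correct and follows essentially the same approach as the paper: fix the unique $u$--$v$ path in $G$ via Theorem~\ref{thm:unique_path}, push it through $\phi$ using adjacency preservation and injectivity to obtain a $\phi(u)$--$\phi(v)$ path in $H$, invoke uniqueness again in $H$, and then compare lengths term by term using the weight-preserving clause of Definition~\ref{def:iso}. The only cosmetic difference is notation ($v_i$ versus $r_i$, $w_G$ versus $(\,\cdot\,,\,\cdot\,)$).
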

    \begin{proof}
    Given that $G$ and $H$ are trees, by theorem \ref{thm:unique_path} there exists a unique path between any two vertices in them. 
    Let $u$, $v$ be vertices in $G$, there exists $R = \{u = r_0, r_1, ..., r_{n-1}, v = r_n\}$ a unique path between them, such that $R$ is a sequence of adjacent vertices without repetition in $G$. For two adjacent vertices $a, b$ let's denote $(a,b)$ as the length of the edge between them, then the length of $R$ is 
    $$length(R) = \sum_{i = 0}^{n-1} (r_i, r_{i+1}) $$
    First, notice that by the definition of path $r_i$ and $r_{i+1}$ are adjacent in $G$, then $\phi(r_i), \phi(r_{i+1})$ are adjacent in $H$, for every $i$, $0 \leq i \leq n-1$. Also by definition of $R$, $r_i\neq r_j$ for $i\neq j$, and by definition of $\phi$ this implies that $\phi(r_i) \neq \phi(r_j)$ for $i \neq j$.\\
    Then $Q = \{\phi(u) = \phi(r_0), \phi(r_1), ..., \phi(r_{n-1}), \phi(v) = \phi(r_{n})\}$ is a sequence of adjacent vertices without repetition in $H$, which implies that $Q$ is a $\phi(u)-\phi(v)$ path, and since $H$ is a tree, $Q$ is the unique  $\phi(u)-\phi(v)$ path in $H$.\\
    Finally, by definition of $\phi$ for every edge, $a-b$ in $G$, the edge $\phi(a)-\phi(b)$ in $H$ has the same weight, therefore:
    $$length(Q) = \sum_{i = 0}^{n-1} (\phi(r_i), \phi(r_{i+1})) = \sum_{i = 0}^{n-1} (r_i, r_{i+1}) = length(R)$$
    \end{proof}
    
    \begin{corollary}\label{cor:same_radium}
    For $G$ and $H$ two isomorphic trees, their radius will have the same length.
    \end{corollary}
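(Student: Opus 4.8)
The plan is to show that an isomorphism of trees preserves the eccentricity of every vertex, and then to conclude that it preserves the radius, the radius being the minimum of the eccentricities. Let $\phi\colon V(G)\to V(H)$ be the isomorphism. First I would establish the unweighted analogue of Lemma \ref{lemma:same_length_paths}: for any two vertices $u,v$ of $G$, the length without weights of the $u$–$v$ path in $G$ equals the length without weights of the $\phi(u)$–$\phi(v)$ path in $H$. This follows by applying Lemma \ref{lemma:same_length_paths} to the two trees obtained from $G$ and $H$ by setting every edge weight equal to $1$: the map $\phi$ is still a weighted-graph isomorphism between these modified trees, since it is already a graph isomorphism and now all weights agree, and by Definition \ref{def:length_of_path} the weighted length in the modified trees is precisely the length without weights in the original ones.

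Next I would use this to compare eccentricities. Fix $v\in V(G)$. By Definition \ref{def:central_vertex_center}, $e(v)$ is the maximum, over all vertices $w$ of $G$, of the length without weights of the $v$–$w$ path. By the previous step each such length equals the length without weights of the $\phi(v)$–$\phi(w)$ path in $H$, and since $\phi$ is a bijection, as $w$ ranges over $V(G)$ its image $\phi(w)$ ranges over all of $V(H)$. Hence the multiset of unweighted path lengths emanating from $v$ in $G$ coincides with the one emanating from $\phi(v)$ in $H$, so their maxima agree; that is, $e_G(v)=e_H(\phi(v))$ for every $v\in V(G)$.

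Finally, since $\phi$ is a bijection $V(G)\to V(H)$ and $e_G(v)=e_H(\phi(v))$ for all $v$, the sets $\{e_G(v):v\in V(G)\}$ and $\{e_H(v'):v'\in V(H)\}$ are equal, hence have the same minimum, so $rad(G)=rad(H)$. As the radius is here measured without weights, the assertion that the two radii ``have the same length'' is just the statement that these two integers coincide, which is what the corollary claims.

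I do not expect a genuine obstacle: the argument is a routine transfer of structure along the isomorphism, parallel to Lemma \ref{lemma:same_length_paths} and its proof. The only point that needs a little care is that Lemma \ref{lemma:same_length_paths} is stated for weighted lengths whereas the radius is defined through lengths without weights; this is resolved by the reduction to unit weights described in the first paragraph (equivalently, one could simply rerun the short argument of that lemma with every weight set to $1$).
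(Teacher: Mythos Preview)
Your proposal is correct and mirrors the paper's own (implicit) reasoning: the corollary is stated immediately after Lemma~\ref{lemma:same_length_paths} without a separate proof, and later in the proof of Lemma~\ref{lemma:center_iso} the paper spells out exactly your idea---apply Lemma~\ref{lemma:same_length_paths} with lengths without weights to get $e_G(v)=e_H(\phi(v))$, then conclude the radii agree. Your care in reducing from weighted to unweighted lengths (by passing to unit weights) is the only detail the paper leaves tacit.
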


    \begin{lemma}\label{lema:bij_neighbors}
    Let $G$ and $H$ be two isomorphic trees with isomorphism function $\phi$, for any $v$ vertex of $G$, the function $\phi \mid_{N(v)}$, where $N(v)$ is the set of the neighbors of $v$, is a bijective function with codomain $N(\phi(v))$ in $H$.
    \end{lemma}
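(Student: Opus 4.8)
The plan is to observe that the tree hypothesis is not really needed here: the statement follows purely from $\phi$ being a graph isomorphism, i.e. a bijection on vertices preserving the adjacency relation in both directions. So I would prove the slightly more general fact about isomorphic graphs, and the tree case follows a fortiori.

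First I would dispose of injectivity: since $\phi\colon V(G)\to V(H)$ is a bijection by Definition \ref{def:iso}, its restriction to any subset of $V(G)$, in particular to $N(v)$, is injective. Hence $\phi\mid_{N(v)}$ is injective, and it only remains to check that its image is exactly $N(\phi(v))$, which will give both that $N(\phi(v))$ is a legitimate codomain onto which the map surjects and the desired bijectivity.

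Next I would show the inclusion $\phi(N(v))\subseteq N(\phi(v))$. Take $u\in N(v)$; by Definition \ref{def:degree_leaf} this means $u$ is adjacent to $v$ in $G$, so by the forward direction of Definition \ref{def:iso} the vertices $\phi(u)$ and $\phi(v)$ are adjacent in $H$, i.e. $\phi(u)\in N(\phi(v))$. Then I would prove the reverse inclusion $N(\phi(v))\subseteq \phi(N(v))$, which is the only place any real content enters. Let $w\in N(\phi(v))$; since $\phi$ is surjective there is $u\in V(G)$ with $\phi(u)=w$, and $w=\phi(u)$ is adjacent to $\phi(v)$ in $H$. Now apply the ``only if'' direction of the adjacency-preservation in Definition \ref{def:iso} to conclude that $u$ is adjacent to $v$ in $G$, i.e. $u\in N(v)$, so $w=\phi(u)\in\phi(N(v))$.

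Combining the two inclusions gives $\phi(N(v))=N(\phi(v))$, so $\phi\mid_{N(v)}\colon N(v)\to N(\phi(v))$ is a well-defined surjection, and together with injectivity it is a bijection, as claimed. The only subtlety worth flagging is that one must use the biconditional nature of adjacency preservation in the definition of isomorphism (not just ``adjacent implies adjacent''); everything else is bookkeeping with the definition of $N(\cdot)$ and with surjectivity of $\phi$.
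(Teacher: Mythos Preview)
Your proof is correct, and in fact cleaner than the paper's. You argue directly from the biconditional adjacency preservation in Definition~\ref{def:iso}, showing $\phi(N(v))=N(\phi(v))$ by two inclusions and invoking bijectivity of $\phi$ for injectivity of the restriction. This works for arbitrary isomorphic graphs, and the tree hypothesis is indeed superfluous.

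The paper takes a different route: it characterises $x\in N(v)$ by $\mathrm{length}_1(v,x)=1$ and then invokes the previous lemma (Lemma~\ref{lemma:same_length_paths}, that isomorphisms of trees preserve path lengths) to conclude $\mathrm{length}_1(v,x)=\mathrm{length}_1(\phi(v),\phi(x))$, hence $x\in N(v)\iff\phi(x)\in N(\phi(v))$. This does use the tree hypothesis, since Lemma~\ref{lemma:same_length_paths} relies on uniqueness of paths. Your argument is more elementary and more general; the paper's version has the virtue of reusing the path-length machinery already in place, but at the cost of importing an unnecessary hypothesis for this particular statement.
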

    \begin{proof}
    Given $v$ a vertex of $G$, let $x \in N(v)$. That happens if and only if $length_1(v, x) = 1$. In the other hand, $\phi(x)$ is neighbor of $\phi(v)$ if and only if $length_1(\phi(v), \phi(x)) = 1$. By using a particular case of the previous lemma we have that $length_1(v, x) = length_1(\phi(v), \phi(x))$, therefore $x \in N(v)$ if and only if $\phi(x) \in N(\phi(v))$. Since $\phi$ is a bijection then $\phi \mid_{N(v)}: N(v)  \to N(\phi(v))$ is a bijection too. This means that the images of the neighbors of $v$ are the neighbors of the image of $v$. 
    \end{proof}

Let's denote $(a,b)_p$ as the path from $a$ to $b$ in the graph and $length_{1}(a, b)$ as its length (without weights), since all the graphs generated by the simply-connected trivalent 2-dimensional stratifolds are trees, by Theorem \ref{thm:unique_path} this is well defined. \\

\begin{lemma}\label{lemma:leaf_ecc}
For any vertex $v$ in a tree $G$, if $u$ is a vertex such that $length_1(u,v) = e(v)$ then $u$ is a leaf.
\end{lemma}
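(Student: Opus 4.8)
The plan is to argue by contradiction: assume $u$ is not a leaf and then exhibit a vertex strictly farther (in the weightless sense) from $v$ than $u$, contradicting $length_1(u,v) = e(v)$. We may harmlessly assume $G$ has at least one edge, so that $e(v) \geq 1$ and in particular $u \neq v$ (the one-vertex tree being a degenerate case one can dismiss separately).

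First I would use Theorem \ref{thm:unique_path} to fix the unique $v$--$u$ path $R = \{v = r_0, r_1, \dots, r_{n-1}, r_n = u\}$, where by hypothesis $n = length_1(u,v) = e(v)$. Since $u$ is assumed not to be a leaf, its degree is at least $2$: it is at least $1$ because $r_{n-1}$ is a neighbour of $u$, and it is not exactly $1$ by assumption (Definition \ref{def:degree_leaf}). Hence there is a neighbour $w$ of $u$ with $w \neq r_{n-1}$.

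The key step is to show $w \notin R$. If instead $w = r_i$ for some $i \leq n-1$, then the sub-path $r_i, r_{i+1}, \dots, r_n = u$ of $R$ followed by the edge $u$--$w = r_i$ would be a closed path of positive length from $r_i$ to itself, contradicting that $G$ is a tree (Definition \ref{def:tree_rooted_tree}). So $w$ is distinct from every $r_0, \dots, r_n$, and therefore $R' = \{v = r_0, r_1, \dots, r_n = u, w\}$ is a sequence of pairwise distinct, consecutively adjacent vertices, i.e.\ a $v$--$w$ path (Definition \ref{def:path}), of weightless length $n+1$.

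Finally, by Definition \ref{def:central_vertex_center} the eccentricity $e(v)$ is the maximum of $length_1(v,x)$ over all vertices $x$, so $e(v) \geq length_1(v,w) = n+1 > n = e(v)$, a contradiction. Thus $u$ must be a leaf. The only genuine subtlety is the middle step — ruling out that the extra neighbour $w$ of $u$ already lies on the path $R$ — and this is precisely where acyclicity of the tree enters; the rest is bookkeeping with the definitions of path, degree, and eccentricity.
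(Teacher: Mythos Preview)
Your proof is correct and follows essentially the same route as the paper's: assume $u$ is not a leaf, pick a neighbour of $u$ off the unique $v$--$u$ path, and obtain a strictly longer path from $v$, contradicting $e(v)$. Your justification that the extra neighbour $w$ lies off the path (via acyclicity) is a bit more explicit than the paper's, but the argument is the same.
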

\begin{proof}
    Suppose that $u$ is not a leaf, then the degree of $u$ is at least $2$, then there exists $x$ neighbor of $u$ such that $x \not \in (v,u)_p$.  By theorem \ref{thm:unique_path} we know that such $x$ exists, because there is a unique path from $v$ to $u$ which means that there is only one neighbor of $u$ connected to $v$. If there where $x, y$ neighbors of $u$ connected to $v$, there would be two paths from $v$ to $u$ and that is a contradiction.\\
    Let's notice that the
    $$e(v) = length_1(v, u) <length_1(v, u) + 1 = length_1(v, u) + length_1(u, x) = length_1(v, x),$$ 
    but that's a contradiction to the definition of eccentricity. Therefore the degree of $u$ must be at most 1, and $u$ is a leaf of $G$.
\end{proof}

\begin{theorem}[Uniqueness of the center]\label{thm:center}
Let $G$ be a trivalent graph, then there exists $c$ center of $G$ and it is unique.
\end{theorem}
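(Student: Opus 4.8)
The plan is to settle existence in a line and devote the work to uniqueness. Existence is immediate: the eccentricity $e$ is a function from the finite vertex set of $G$ to the positive integers, so it attains its minimum value $rad(G)$, and any vertex attaining it is a central vertex in the sense of Definition~\ref{def:central_vertex_center}; hence $G$ has at least one. To show there is exactly one I proceed in two stages. The first is purely tree-theoretic: by repeatedly deleting all leaves one proves that any tree has at most two central vertices and that, if it has two, they are adjacent. The second uses the bipartite black/white structure of a trivalent graph to exclude the two-vertex case.

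The engine of the first stage is the following claim: if $T$ is a tree with $|V(T)|\ge 3$ and $T'$ is obtained from $T$ by deleting all of its leaves and the edges incident to them, then $T'$ is a tree, $length_1$ between any two vertices of $T'$ is the same whether computed in $T$ or in $T'$, and $e_{T'}(v) = e_T(v) - 1$ for every vertex $v$ of $T'$. Connectedness of $T'$ and the invariance of distances follow from Theorem~\ref{thm:unique_path}: a leaf is an endpoint of every path containing it, so no path between two non-leaves of $T$ meets a leaf. For the eccentricity identity, fix a vertex $v$ of $T'$ and choose a vertex $\ell$ with $length_1(v,\ell)=e_T(v)$; by Lemma~\ref{lemma:leaf_ecc} the vertex $\ell$ is a leaf, hence not in $T'$, while the neighbour of $\ell$ on the path from $v$ to $\ell$ is not a leaf (otherwise that neighbour would be a leaf too and $T$ would be a single edge), so it lies in $T'$ at distance $e_T(v)-1$ from $v$, giving $e_{T'}(v)\ge e_T(v)-1$. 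Conversely, any vertex of $T'$ at distance $e_T(v)$ from $v$ would be a leaf of $T$ by Lemma~\ref{lemma:leaf_ecc}, which is impossible; so $e_{T'}(v)\le e_T(v)-1$ and equality holds. Therefore $rad(T') = rad(T)-1$, and since a leaf of a tree on at least three vertices is never central (its unique neighbour has strictly smaller eccentricity), a vertex is central in $T'$ exactly when it is central in $T$.

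Iterating this, $G$ produces a finite chain of trees $G = G_0, G_1, \dots, G_k$ in which $G_{i+1}$ is $G_i$ with its leaves removed and $1\le |V(G_k)|\le 2$; applying the last clause of the claim along the chain, every $G_i$, and in particular $G_k$, has the same set of central vertices as $G$. If $|V(G_k)|=1$, that vertex is the unique center and we are done. If $|V(G_k)|=2$, then $G_k$ is an edge, so $G$ has exactly two central vertices $c_1,c_2$ and they are adjacent in $G$; this is where the hypothesis is used. The graph $G$ is bipartite with colour classes the white and the black vertices (every edge joins a white vertex to a black one), so $c_1$ and $c_2$ have opposite colours, say $c_1$ white and $c_2$ black; set $r = rad(G) = e(c_1) = e(c_2)$. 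Some vertex lies at distance $r$ from $c_1$, and by Lemma~\ref{lemma:leaf_ecc} it is a leaf, hence white because every leaf of a trivalent graph is white; the path from $c_1$ to it has $r$ edges and joins two white vertices, so bipartiteness forces $r$ even. The same argument applied to $c_2$ produces a white leaf at distance $r$ from the black vertex $c_2$, forcing $r$ odd -- a contradiction. Hence $|V(G_k)|\ne 2$, and the center of $G$ exists and is unique. The real crux is exactly this last exclusion: a general tree may have two adjacent central vertices (a path on four vertices is the standard example), so tree-ness alone cannot suffice, and the argument rests squarely on the two structural facts about the graph of a simply connected trivalent 2-stratifold, that it is bipartite along the black/white colouring and that all its leaves are white; the single genuinely technical point on the tree side, the ``decreases by exactly one'' clause of the leaf-deletion lemma, is precisely what Lemma~\ref{lemma:leaf_ecc} is designed to supply.
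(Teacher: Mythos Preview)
Your proof is correct and takes a genuinely different route from the paper's. You invoke the classical leaf-pruning characterisation of tree centers (iterated deletion of leaves preserves the set of central vertices and terminates at one or two adjacent vertices), and then use the bipartite black/white colouring together with ``all leaves are white'' to rule out the two-vertex case by a parity clash on $rad(G)$. The paper proceeds more directly: it first observes, via the same parity argument, that no white and black vertex can share the minimum eccentricity; then, assuming two distinct central vertices $u,v$ of the same colour, it picks any vertex $x$ strictly between them on $(u,v)_p$ and shows by a two-case analysis (according to whether $x$ lies on $(v,l_i)_p$ for a given leaf $l_i$) that $length_1(x,l_i)<e(v)$ for every leaf, whence $e(x)<e(v)$, a contradiction.

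What each approach buys: yours is modular and reusable --- the leaf-deletion lemma is a general tree fact, and the trivalent hypothesis enters only at the very end to kill the adjacent-pair case; this makes transparent exactly where bipartiteness and ``leaves are white'' are needed. The paper's argument avoids setting up the pruning machinery and instead exhibits explicitly a vertex of smaller eccentricity between two putative centers, which is shorter to state but requires the slightly fiddly case split on paths to leaves. One minor wording point in your write-up: the parenthetical ``otherwise that neighbour would be a leaf too and $T$ would be a single edge'' is not quite the right justification; the cleaner reason the penultimate vertex $w$ on the $v$--$\ell$ path is not a leaf is that either $w=v\in T'$ or $w$ is an interior vertex of a path and hence has degree at least $2$. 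This does not affect the validity of the argument.
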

\begin{proof}
    By definition, there exists at least one vertex $v$ such that $e(v) = rad(G)$.\\
    First, let's notice that two vertices of different colors in a trivalent graph can't be both central vertices. Every trivalent graph has the property that all the neighbors of a white vertex are black vertices and vice versa, also all the leaves are white. Then by parity, the $length_1$ from any white vertex to a leaf would be even and the $length_1$ from any black vertex to a leaf would be odd, using the lemma \ref{lemma:leaf_ecc} we can conclude that the eccentricity of a white vertex would be always different from the eccentricity of a black vertex, which means that they can't be both central vertices.\\
    Now suppose that there exists $u \neq v$ such that $u$ is a central vertex of $G$. Without loss of generality we can assume that $u, v$ are both white. Then there exists $x$ a black vertex such that $x \in (u,v)_p$. We would prove that $e(x) < e(v)$. \\
    Let $l_1, l_2, ...., l_n$ be the leaves of $G$, by definition, for any leaf $l_i \in G$, we will have $length_1(v, l_i) \leq e(v)$ and $length_1(u, l_i) \leq e(u)$. \\
    Notice that for any $l_i$ we have two options, $x  \not \in (v,l_i)_p$ or  $x \in (v,l_i)_p$. We will analyze both cases.\\
    Case 1,  $x \not \in (v,l_i)_p$:\\
    If $x \not \in (v, l_i)_p$, in particular $(x, u)_p \not \subset (v, l_i)_p$ because $x \in (x,u)_p \subset (v, u)_p$ and the last one is unique, by lemma \ref{thm:unique_path}. Notice that there exists a unique  vertex $w \in (v, x)_p$ (it could happen that $w =v$) such that $(v, l_i)_p = (v, w)_p \cup (w, l_i)_p$ and $(w, l_i)_p \cap (v, x)_p = \{w\}$, therefore we have that\\
    $$length_1(w, l_i) + length_1(x, w) = length_1(x, l_i) < length_1(x, l_i) + length_1(u, x) = length_1(u, l_i) \leq e(u) = e(v)$$
    Then $length_1(x, l_i) < e(v)$. \\
    Case 2, $x \in (v, l_i)_p$:\\
    Since $x \in (v,l_i)_p$ then $(v, l_i)_p = (v, x)_p \cup (x, l_i)_p$ therefore\\
    $$length_1(x, l_i) < length_1(x, l_i) + length_1(v, x) = length_1(v, l_i) \leq e(v)$$ 
    which implies $length_1(x, l_i) < e(v)$.\\
    From both cases, we can conclude that $length_1(x, l_i) <e(v)$ for any leaf $l_i \in G$, then $$\displaystyle e(x) = \max_{1\leq i \leq n}length_1(x, l_i) < e(v),$$ 
    contradicting the fact that $v$ and $u$ were both central vertices because there is another vertex with lower eccentricity than the radius. \\
    This proves that for any trivalent graph, the center exists and it is unique.
\end{proof}

\begin{lemma}\label{iso_iff_isotg}
    Any two trivalent graphs are isomorphic as trivalent graphs if and only if they are isomorphic as graphs.
    \end{lemma}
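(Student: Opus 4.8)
The plan is to prove the two implications separately. The ``only if'' direction is immediate: if $G$ and $H$ are isomorphic as trivalent graphs, then by Definition \ref{def:iso_tg} there is in particular an isomorphism of weighted graphs between them in the sense of Definition \ref{def:iso}, so they are isomorphic as graphs.

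For the ``if'' direction, let $\phi\colon V(G)\to V(H)$ be an isomorphism of weighted graphs; I want to show it is automatically an isomorphism of trivalent graphs. Since $\phi$ is already a bijection on vertex sets, it suffices to prove $\phi(W(G))=W(H)$ and $\phi(B(G))=B(H)$, for then the restrictions $\phi\restriction_{W(G)}$ and $\phi\restriction_{B(G)}$ are bijections onto $W(H)$ and $B(H)$ respectively. The facts I will use are the structural properties of trivalent graphs already invoked in the proof of Theorem \ref{thm:center}: a trivalent graph is a tree, the two endpoints of every edge have opposite colours, and every leaf is white. The first two say that $G$ is bipartite with colour classes $W(G)$ and $B(G)$; hence, fixing any vertex $a$ of $G$, a vertex $v$ has the same colour as $a$ if and only if the (unique, by Theorem \ref{thm:unique_path}) $a$–$v$ path has an even number of edges, i.e. $length_1(a,v)$ is even. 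I will run this argument with $a$ a leaf.

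Assume first that $G$ has a leaf $l$; by the third fact, $l$ is white, and by Lemma \ref{lemma:leaves_if_iso} so is $\phi(l)$, which is a leaf of $H$. An isomorphism of weighted graphs preserves adjacency, so, arguing exactly as in the proof of Lemma \ref{lemma:same_length_paths} but counting edges instead of summing weights, it sends the unique $l$–$v$ path of $G$ to the unique $\phi(l)$–$\phi(v)$ path of $H$ with the same number of edges; thus $length_1(l,v)=length_1(\phi(l),\phi(v))$ for every vertex $v$. Chaining the equivalences: $v\in W(G)$ if and only if $length_1(l,v)$ is even, if and only if $length_1(\phi(l),\phi(v))$ is even, if and only if $\phi(v)\in W(H)$. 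Hence $\phi(W(G))\subseteq W(H)$ and $\phi(B(G))\subseteq B(H)$, and since $\phi$ is a bijection both inclusions are equalities, as required. If instead $G$ has no leaf, then being a finite tree it has a single vertex, which is white since a trivalent graph always contains a white vertex; the same holds for $H\cong G$, so $\phi$ respects the colouring trivially.

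The only step with real content is the reverse direction, and inside it the single non-bookkeeping point is that a plain graph isomorphism must respect the white/black colouring — which is forced here because leaves are always white and colours alternate along edges, so colour is detected by the parity of the distance to a fixed leaf, a quantity preserved by the isomorphism.
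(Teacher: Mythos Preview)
Your proof is correct and follows essentially the same parity argument as the paper: colours alternate along edges, leaves are white, and an isomorphism preserves unweighted path lengths, so it must preserve colours. The only difference is that the paper detects the colour of a vertex $v$ via the parity of its eccentricity $e(v)$ (which, by Lemma~\ref{lemma:leaf_ecc}, is realised at a leaf and hence has parity matching $v$'s colour), whereas you fix a single leaf $l$ and use the parity of $length_1(l,v)$ directly. Your route is marginally more elementary, avoiding the detour through eccentricity, but the underlying idea is the same.
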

    \begin{proof}
    By definition, it is clear that isomorphism as trivalent graphs implies isomorphism. We only need to prove that isomorphism implies isomorphism as trivalent graphs, which is the isomorphism function determines a bijection between the set of black vertices of both graphs, analogously with the white vertices. \\
    Let $G$ and $H$ be two trivalent graphs. If instead of the length in lemma \ref{lemma:same_length_paths} we consider the length (without weights) we can conclude that for any vertex $v \in G$, if $\phi(v) = w\in H$ then $e(v) = e(w)$.\\
    Using the same parity argument as in the previous proof, notice that $e(v) = e(w)$ if only if $v, w$ are both white or both black. Moreover, $e(v), e(w)$ would be odd if and only if $v$ and $w$ are black, and would be even if and only if $v$ and $w$ are white. \\
    This gives us a partition of the graph that assures us that the image of any white vertex is going to be a white vertex, and the image of any black vertex is going to be a black vertex. Therefore the restriction of $\phi$ to the set of black vertices in $G$ is a bijection with codomain the set of black vertices in $H$, analogously with the white vertices. Then we can conclude that isomorphism implies isomorphism as trivalent graphs.
    \end{proof}
    
\begin{lemma}\label{lemma:center_iso}
 For $G$ and $H$ two isomorphic trivalent graphs with isomorphism function $\phi$, then the center of $H$ is the image of the center of $G$ under $\phi$.
\end{lemma}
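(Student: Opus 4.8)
The plan is to show that $\phi$ sends the center of $G$ to a central vertex of $H$, and then to invoke the uniqueness of the center to conclude equality. Recall that by Theorem~\ref{thm:center} both $G$ and $H$, being trivalent graphs, have a unique center; denote them $c_G$ and $c_H$.

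First I would record the eccentricity-preservation fact already used in the proof of Lemma~\ref{iso_iff_isotg}: reading Lemma~\ref{lemma:same_length_paths} with the length \emph{without} weights in place of the weighted length gives $e(v) = e(\phi(v))$ for every vertex $v$ of $G$. Since $\phi\colon V(G)\to V(H)$ is a bijection, the set of eccentricity values $\{e(\phi(v)) : v\in V(G)\}$ coincides with $\{e(w): w\in V(H)\}$; taking the minimum of both sides gives $rad(G)=rad(H)$, which is essentially Corollary~\ref{cor:same_radium}.

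Then I would evaluate the eccentricity of $\phi(c_G)$: by the displayed equality, $e(\phi(c_G)) = e(c_G) = rad(G) = rad(H)$, so $\phi(c_G)$ is a central vertex of $H$. Since $H$ is trivalent, Theorem~\ref{thm:center} tells us it has exactly one central vertex, namely $c_H$; hence $\phi(c_G)=c_H$, i.e.\ the center of $H$ is the image of the center of $G$ under $\phi$.

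I do not expect a real obstacle here. The one point that must not be skipped is that the equality $rad(G)=rad(H)$ genuinely uses the surjectivity of $\phi$ (so that every eccentricity value occurring in $H$ is realized in $G$ as well), and the substantive ingredient — uniqueness of the center for trivalent graphs — is precisely what Theorem~\ref{thm:center} supplies. Once these facts are assembled, all that remains is the one-line identification $\phi(c_G)=c_H$.
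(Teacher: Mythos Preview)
Your proposal is correct and follows essentially the same line as the paper: use Lemma~\ref{lemma:same_length_paths} (read with unweighted length) to get $e(v)=e(\phi(v))$, combine with Corollary~\ref{cor:same_radium} to obtain $e(\phi(c_G))=rad(H)$, and then conclude $\phi(c_G)=c_H$. The only minor difference is that you make the final appeal to the uniqueness part of Theorem~\ref{thm:center} explicit, whereas the paper leaves it implicit when it jumps from ``$\phi(c)$ is a central vertex'' to ``$\phi(c)$ is the center''.
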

\begin{proof}
As a result of the lemma \ref{lemma:same_length_paths}, if we consider the length (without weights) instead of the length, for any $v \in G$, the eccentricity of $v$ in $G$ will be the same as the eccentricity of $\phi(v)$ in $H$. Let $c$ be the center of $G$, which is unique, using corollary \ref{cor:same_radium} we have that 
$$rad(H) = rad(G) = e(v) = e(\phi(c))$$
By definition, since $rad(H) = e(\phi(c))$ we can conclude that $\phi(c)$ is the center of $H$.
\end{proof}

\begin{lemma}\label{lemma:isotg_iff_isort}
Given $G$ and $H$ trivalent graphs, if we select the center of each graph as its root, $G$ and $H$ are isomorphic as trivalent graphs if and only if $G$ and $H$ are isomorphic as rooted trees.
\end{lemma}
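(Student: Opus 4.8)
The plan is to derive the equivalence directly from the structural facts already in hand: every trivalent graph is a tree, Lemma \ref{iso_iff_isotg} says that isomorphism as trivalent graphs coincides with isomorphism as weighted graphs, and Lemma \ref{lemma:center_iso} says that an isomorphism of trivalent graphs carries center to center. Throughout, let $c_G$ and $c_H$ be the centers of $G$ and $H$, which exist and are unique by Theorem \ref{thm:center}, and which we take as the chosen roots.

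For the forward implication, I would start from an isomorphism $\phi\colon V(G)\to V(H)$ as trivalent graphs. Such a $\phi$ is in particular an isomorphism of weighted graphs, and since $G$ and $H$ are trees, Lemma \ref{lemma:center_iso} applies and yields $\phi(c_G)=c_H$. By Definition \ref{def:iso}, a weighted-graph isomorphism that maps root to root is precisely an isomorphism of rooted trees, so $G$ and $H$ are isomorphic as rooted trees when rooted at their centers. For the converse, I would start from an isomorphism $\psi\colon V(G)\to V(H)$ of rooted trees with $\psi(c_G)=c_H$; dropping the condition on the roots, $\psi$ is an isomorphism of weighted graphs, so Lemma \ref{iso_iff_isotg} immediately gives that $G$ and $H$ are isomorphic as trivalent graphs.

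The argument is short because the substantive content sits in the earlier lemmas; the only point that needs care is the logical direction. Both "isomorphic as trivalent graphs" and "isomorphic as rooted trees" are existential statements, so in the forward direction I must check that the given trivalent-graph isomorphism itself respects the centers — this is exactly the role of Lemma \ref{lemma:center_iso}, and it is where uniqueness of the center (Theorem \ref{thm:center}) is essential, since otherwise "the center" would not be a well-defined root — whereas in the backward direction it suffices to observe that a rooted-tree isomorphism is a fortiori a weighted-graph isomorphism before invoking Lemma \ref{iso_iff_isotg}. I do not expect any genuine obstacle beyond bookkeeping these two directions correctly.
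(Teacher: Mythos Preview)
Your proposal is correct and follows essentially the same route as the paper: the forward direction invokes Lemma~\ref{lemma:center_iso} to show the isomorphism carries center to center, and the converse drops the root condition and applies Lemma~\ref{iso_iff_isotg}. Your write-up is in fact cleaner, since it explicitly names Theorem~\ref{thm:center} as the reason the root is well defined, whereas the paper leaves this implicit.
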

\begin{proof}
If $G$ and $H$ are isomorphic as trivalent graphs, by lemma \ref{lemma:center_iso}, since the center of $H$ is the image of $G$ under the isomorphism function, it is immediate that $G$ and $H$ are isomorphic as rooted trees. Now let's suppose that $G$ and $H$ are isomorphic as rooted trees. By definition, the isomorphism as rooted trees implies that $G$ and $H$ are isomorphic, therefore by lemma \ref{lemma:isotg_iff_isort}, since $G$ and $H$ are two trivalent graphs that are isomorphic, then they are isomorphic as trivalent graphs. 
\end{proof}

\section{Characterizing weighted trees with a string}

So far, we have described how two isomorphic trivalent graphs behave, but we need tools to identify if two trivalent graphs are isomorphic. This is important because the main goal is to know how many and which are all the trivalent stratifolds for a given number of white vertices. Since the trivalent stratifolds are associated with a unique trivalent graph, having a classification for the trivalent graphs gives us  a classification for the trivalent stratifolds. \\

The generation of all the trivalent graphs is an iterative process that creates a lot of isomorphic graphs, we will discuss this process further in Section \ref{sec:labels}. But it is because of this excess of repetitions, that we need an optimal algorithm that can recognize isomorphic graphs with as few operations as possible.

In the book \textit{The Design and Analysis of Computer Algorithms} \cite{book:AHUBook} the authors propose an algorithm that allows us to identify if two non-weighted rooted trees are isomorphic in $O(n)$ time. This algorithm is known as the AHU algorithm, the acronyms AHU comes from the initials of the authors Aho, Hopcroft, and Ullman. To use this algorithm is necessary to remark that two isomorphic trees could be non-isomorphic as rooted trees (see Fig. \ref{fig:rooted_trees} for an example). \\

\begin{figure}[h]
    \centering
    \includegraphics[height = 3cm]{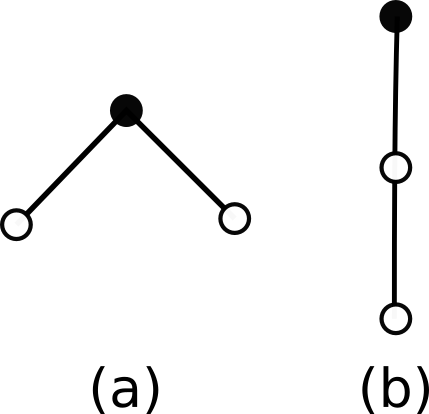}
    \caption{This is an example of two isomorphic trees that aren't isomorphic as rooted trees. On each tree, we have marked in bold black the root. }
    \label{fig:rooted_trees}
\end{figure}

In the article \textit{Tree isomorphism Algorithms: Speed vs. Clarity} \cite{art:AHUalgorithm}, there is an algorithm that improves the idea of Aho, Hopcroft, and Ullman, by implementing the use of \textit{parenthetical tuples}. And then substituting the use of `(', `)' for `1' and `0' respectively,  the latest have a natural order.

Given a rooted tree $T$, the main idea of the algorithm is to assign a unique string to each vertex of $T$ recursively. The string assigned to a vertex is created recursively from the string associated with its children. And finally, assign to $T$ the string associated with its root. Then we can conclude that two rooted trees are isomorphic if and only if they have the same associated string.\\
 
We now present the pseudo-code of the AHU Algorithm. An example of this process can be seen in Fig.~\ref{fig:AHUAlgorithm}.

\begin{algorithm}[h]
\caption{AHU($v$: vertex)}
\begin{algorithmic}
\IF{$v$ is childless}
\STATE Give $v$ the tuple name ``10''
\RETURN ``10''
\ELSE
\STATE Set $L = \emptyset$  
\FORALL{$w$ child of $v$}
\STATE $tag$ = \texttt{AHU($w$)};
\STATE Append $tag$ to $L$
\ENDFOR
\STATE Sort $L$ using binary order
\STATE Set $temp = $ Concatenation of tags in $L$
\STATE Give $v$ the tuple name ``$1temp0$''
\ENDIF
\end{algorithmic}
\end{algorithm}

\begin{figure}
    \centering
    \includegraphics[height = 5cm]{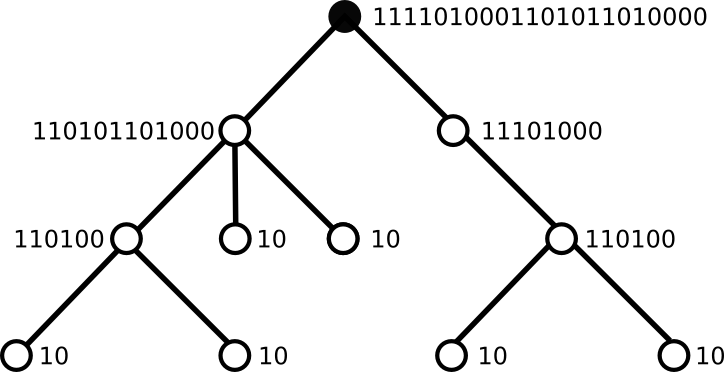}
    \caption{Example of labels given by running the AHU to a rooted tree}
    \label{fig:AHUAlgorithm}
\end{figure}

Although this algorithm allows us to recognize if two rooted trees are isomorphic, it doesn't allow us to distinguish if two rooted trees with weights are isomorphic, because it doesn't take into account the weights. Since trivalent graphs are weighted trees, this algorithm doesn't work for our problem in the first instance.\\

By theorem \ref{thm:center}, given a trivalent graph $G$, there is a unique vertex $v$ such that $v$ is the central vertex of $G$, we call $v$ the center of $G$. The uniqueness of the center for every trivalent graph allows us to mark this vertex as the root of the trivalent graph, without ambiguity. 

We have proven that the center exists, but so far we only have given an exhaustive algorithm to find it. In \cite{book:AHUBook} on pages 176 to 179, Aho, Hopcroft, and Ullman describe the algorithm \textit{Depth-first search} whose purpose is to find the largest path in a tree. It is proven that this algorithm only needs $O(\max\{n, e\})$ steps on a graph with $n$ vertices and $e$ edges. The idea of this algorithm is to visit every vertex of the tree in an ordered way, going deeper before continuing to another branch of the tree, this way we can assure that we visit every vertex exactly once.\\

\begin{algorithm}
\caption{Depth-first\_search($v$:vertex)}
\begin{algorithmic}
\IF{$v$ is childless} 
    \RETURN ${v}$
\ELSE 
    \STATE Set $length=0$ and $longestPath = \emptyset$
    \FORALL{$w$ child of $v$}
        \STATE Set $path=$\texttt{Depth-first\_search($w$)} and $L$ as the length of $path$.
        \IF{$L > length$ }
        \STATE Set $length= L$ and $longestPath = path$
        \ENDIF
    \ENDFOR
    \RETURN ${v} \cup longestPath$
\ENDIF
\end{algorithmic}
\end{algorithm}

To find the center of the trivalent graph it is only necessary to find the longest path in the tree and then find the middle vertex of it. This will always be the center of the tree. The pseudo-code is the algorithm \ref{alg:center-trivalent-graph}.

The Algorithm \ref{alg:center-trivalent-graph} successfully finds the center of any trivalent graph because it first finds a path of maximal length (a diameter) of the tree. Any diameter of a trivalent has an even length (see Theorem \ref{thm:center} ). By the definition of the center, all vertices must be at a distance at most half of the diameter. That is why the center must be in the middle of any diameter. 

\begin{algorithm}\label{alg:center-trivalent-graph}
\caption{center($G$: Trivalent graph)}
\begin{algorithmic}
    \STATE Set $v$ a vertex of $G$
    \STATE Set $longestPath =$\texttt{Depth-first\_search($v$)}
    \STATE Set $w$ as the last vertex of the path $longestPath$.
    \STATE Set $longestPath =$\texttt{Depth-first\_search($w$)}
    \STATE Set $center$ as the middle vertex of $longestPath$
    \STATE return $center$
\end{algorithmic}
\end{algorithm}

To prove that we can find a diameter with two runs of DFS we do as follows. What we have to show is that at the first run of the DFS we end up at the end of a diameter of $G$. So, at the next run, we will end up at the other end of the diameter. Then, what we have to show is that that farthest vertex $w$ from a given vertex $v$ in $G$ belongs to a diameter of $G$.

Let $w$ be one of the farthest vertices from $v$. Observe first that $w$ has to be a leaf, otherwise, we can find a farther vertex from $v$.  Now, let $\gamma$ a diametrical path of $G$. As $w$ is a leaf, if $w$ belongs to $\gamma$ then the proof would be over. Lets call $a$ and $b$ the ends of $\gamma$. By the definition of $w$, the vertices $a$ and $b$ need to be no further from $v$ than $w$. If one of them were closer to $v$, we can prove that one of the paths from $w$ to $a$ or $b$ is longer than $\gamma$ (an analysis by cases is needed it here). This contradicts the fact that $\gamma$ is a diameter. Similarly, if both $a$ and $b$ are as far from $v$ than $w$, it implies that one of the paths from $w$ to $a$ or $b$ is as long as $\gamma$, making $w$ the end of a diameter. Which completes the proof. So, the second time that the DFS runs, it will find the other end of the diameter.

This algorithm needs twice the number of steps that \texttt{Deep-first\_search} plus a constant, which means  that this algorithm is still linear and only depends on the number of vertices and edges of the graph.
 
Now, every trivalent graph can be seen as a rooted tree and therefore we can apply the AHU algorithm to it. The problem is that the AHU algorithm doesn't take into account the weights of the edges, but we have solved this problem by instead of assigning only numbers `1' and `0' we include the numbers `2' and `3' depending on the weight in the edge that connects the vertex with its father. This algorithm still runs on linear time.

The recursive part of the algorithm is given by the following pseudo-code:

\begin{algorithm}[h]
\caption{AHU-modified($v$:vertex)} \label{AHU_modified}
\begin{algorithmic}
\IF{$v$ is childless}
        \IF {$v$ has no father \OR  $Weight[v,father(v)] =1$ }
            \STATE Give $v$ the tuple name ``01'';
        \ELSE 
            \STATE Give $v$ the tuple name ``23'';
        \ENDIF
    \RETURN  The tuple name of $v$
\ELSE
\STATE Set $L = \emptyset$  
\FORALL{$w$ child of $v$}
\STATE $tag$ = \texttt{AHU-modified($w$)};
\STATE Append $tag$ to $L$
\ENDFOR
\STATE Sort $L$ using base four order
\STATE Set $temp = $ Concatenation of tags in $L$
    \IF {$v$ has no father \OR  $Weight[v,father(v)] =1$ }
        \STATE Give $v$ the tuple name ``$0temp1$'';
    \ELSE 
        \STATE Give $v$ the tuple name ``$2temp3$'';
    \ENDIF
\RETURN The tuple name of $v$
\ENDIF
\end{algorithmic}
\end{algorithm}

Now, given any trivalent graph, to get its string it is necessary to get its center first and then get the string associated with that vertex. For the complete implementation, see Algorithm \ref{TGtoString}.
\begin{algorithm}[ht]\label{TGtoString}
\caption{TG\_to\_string($G$: Trivalent graph)}
\begin{algorithmic}
    \STATE Set $c$ as the output of \texttt{center($G$)};
    \STATE Set $c$ as the root of $G$.
    \STATE \texttt{Vertex\_to\_string($c$)};
    \STATE Return the tuple name of $c$;
\end{algorithmic}
\end{algorithm}

\begin{definition}\label{def:string-representation-of-a-tree}
Given a trivalent graph $G$ we call the output of \texttt{TG\_to\_string} ($G$) [\ref{TGtoString}] as the \emph{string representation of $G$}. 
\end{definition}

\begin{theorem}\label{thm:String_iff_TG}
Given two trivalent graphs, they are isomorphic if and only if they have the same string representation.
\end{theorem}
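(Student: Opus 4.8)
The plan is to reduce the statement to rooted trees and then show that the string produced by \texttt{AHU-modified} is a complete isomorphism invariant of weighted rooted trees.

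\emph{Reduction.} For a trivalent graph $G$, Theorem~\ref{thm:center} gives a unique center, so rooting $G$ there is well defined and \texttt{TG\_to\_string}$(G)$ is exactly the tuple name that \texttt{AHU-modified} assigns to that root. By Lemma~\ref{iso_iff_isotg}, $G$ and $H$ are isomorphic iff they are isomorphic as trivalent graphs, and by Lemma~\ref{lemma:isotg_iff_isort} this happens iff they are isomorphic as rooted trees once each is rooted at its center. Hence it suffices to prove: two trivalent graphs rooted at their centers are isomorphic as rooted trees iff \texttt{AHU-modified} returns the same string at the two roots. I will also use that every edge of a trivalent graph has weight $1$ or $2$ — by trivalence the edge-weights at a black vertex sum to $3$, and an edge of weight $3$ would make its black endpoint a leaf, impossible since all leaves are white — so the symbols $0,1$ versus $2,3$ record the weight of the edge to the parent faithfully.

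\emph{Isomorphic $\Rightarrow$ same string.} Suppose $\psi$ is a rooted-tree isomorphism between the center-rooted trees of $G$ and $H$. I would show by induction on the height of $v$ that \texttt{AHU-modified}$(v)=$\texttt{AHU-modified}$(\psi(v))$. A childless $v$ is sent by $\psi$ to a childless vertex (Lemma~\ref{lemma:leaves_if_iso}), and $\psi$ preserves both the existence and the weight of the edge to the parent, so the two get the same two-symbol tag. If $v$ has children, then since $\psi$ fixes roots and preserves unique paths it sends the parent of $v$ to the parent of $\psi(v)$, and by Lemma~\ref{lema:bij_neighbors} it restricts to a weight-preserving bijection from the children of $v$ onto the children of $\psi(v)$; the inductive hypothesis then makes the two multisets of child tags equal, so the sorted concatenations $temp$ coincide and, with the matching parent-edge symbols, the wrapped tags coincide. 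Taking $v$ to be the root yields equal strings.

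\emph{Same string $\Rightarrow$ isomorphic.} This is the heart of the argument. I would prove, by induction on the length of the string, the stronger claim that if \texttt{AHU-modified}$(v)=$\texttt{AHU-modified}$(v')$ then the subtree rooted at $v$ is isomorphic as a rooted tree to the subtree rooted at $v'$ by an isomorphism that also matches the weights of the edges from $v$ and $v'$ to their parents. The new ingredient is that the concatenation $temp$ parses back uniquely into the individual child tags: reading $0,2$ as opening symbols and $1,3$ as closing symbols, every tag produced by the algorithm is a balanced word no proper nonempty prefix of which is balanced (an easy induction mirroring the recursion), and a concatenation of such primitive balanced words splits in exactly one way, namely at each return of the running open-count to zero. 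Granting this, equal tags for $v$ and $v'$ force equal parent-edge weight bits and, after splitting the (sorted) $temp$'s, equal multisets of child tags; the inductive hypothesis then supplies a weight-respecting bijection between the children of $v$ and those of $v'$ together with isomorphisms of the corresponding subtrees, and gluing these along $v\mapsto v'$ gives the required isomorphism. Applying the claim to the two roots (which have no parent edge) produces an isomorphism of the center-rooted trees, which by the reduction means $G$ and $H$ are isomorphic. I expect the main obstacle to be precisely this unique-parsing step, together with the bookkeeping needed to see that ``child'' in the rooted-tree sense is matched correctly; everything else is routine induction.
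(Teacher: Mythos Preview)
Your proof is correct and more carefully argued than the paper's, but it takes a somewhat different route. The paper dispatches the theorem in one stroke: it exhibits an explicit decoding algorithm \texttt{String\_to\_TG} that reads the string left to right, treats each opening symbol ($0$ or $2$) as ``create a child with edge-weight $1$ or $2$'' and each closing symbol as ``return to parent'', and asserts that this reconstructs the graph uniquely --- so the map from (isomorphism classes of) trivalent graphs to strings is a bijection. You instead do the two implications separately by structural induction, and in the ``same string $\Rightarrow$ isomorphic'' direction you isolate and prove the unique-parsing property (primitive balanced words, running open-count returning to zero). That is exactly the fact on which the paper's decoding algorithm silently relies but never states; your version makes the correctness of the parse explicit, whereas the paper's version has the virtue of being constructive and immediately implementable. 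Either approach is fine, and your reduction step (center $\to$ rooted tree via Lemmas~\ref{iso_iff_isotg} and~\ref{lemma:isotg_iff_isort}) and your remark that edge weights are only $1$ or $2$ are both correct and a bit more explicit than what the paper spells out.
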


To prove this theorem, we are going to give an algorithm that recovers the original graph given a string, and this will prove that there is a unique graph associated with any string.

\begin{algorithm}[H]
\label{alg:string-to-tg}
\caption{String\_to\_TG($S$: string, $father$: vertex)}
\begin{algorithmic}
    \IF{$father$ is \texttt{NONE}}
        \STATE Draw a vertex $v$;
        \STATE State $father$ as $v$;
    \ENDIF
    \IF{The first element of $S$ is 0}
        \STATE Set $Close$ as 1;
    \ELSE
        \STATE Set $Close$ as 3;
    \ENDIF 
    \STATE Set $i$ as 2
    \WHILE{The $i$-th element of $S$ is different from $Close$}
    \IF{The $i$-th element of $S$ is 0}
        \STATE Draw a vertex $w$ connected to $father$ with weight 1;
    \ELSE 
        \STATE Draw a vertex $w$ connected to $father$ with weight 2;
    \ENDIF
    \STATE Set $P$ as the string $S$ without its first element;
    \STATE Set $i$ = \texttt{String\_to\_TG($P$, $w$)} + 2;
    \ENDWHILE
    \STATE Return $i$;
\end{algorithmic}
\end{algorithm}

This algorithm draws a unique trivalent graph given a string representation, therefore we are giving a bijection between the trivalent graphs and the string representations, proving the theorem \ref{thm:String_iff_TG}. 

This algorithm can be extended for n-colored trees in general, also it can be extended for trees with a greater amount of weights, it only needed to add more start-close indicator numbers to identify the different weights.

\section{The graph generator algorithm}
Given a tivalent graph, one can generate more by applying the operations O1 or O2 to any white vertex of it. Or given two trivalent graphs, we can generate a new one by applying the operation O1* in one white vertex of each graph. It is proven in \cite{art:Models} that all the trivalent graphs can be obtained by recursively using these operations in all the previous trivalent graphs, starting with the B111 and B12 trees. 

The B111 and B12 trees are defined in \cite{art:Models} in Definition 1. 
\begin{definition}
\begin{enumerate}
    \item The B111-tree is the bi-colored tree consisting of one black vertex incident to three edges each of label 1 and three terminal white vertices each of genus 0.
    \item The B12-tree is the bi-colored tree consisting of one black vertex incident to two edges one of label 1, the other of label 2, and two terminal white vertices each of genus 0.
\end{enumerate}
\end{definition}

Also the operations O1, O2 and O1* are defined in \cite{art:Models}. 
\begin{definition}
In a trivalent graph $\Gamma$ let $w$ be a white vertex and let $e_1, ..., e_m$ be the edges incident to $w$ ($m \geq 0$) and let $b_i$ be the black vertex incident to $e_i$($i = 1, ..., m$). We define the operations $O1$ and $O2$ on $\Gamma$ that changes $\Gamma$ to a new trivalent graph $\Gamma_1$ as follows:
\begin{enumerate}
    \item \emph{O1.} Let $0 \leq k \leq m$. Attach one white vertex of a B111-tree to $w$, cut off $b_{k-1}, ..., b_m$ from $w$ and attach $b_{k+1}, ..., b_m$ to another white vertex of the B111-tree.
    \item \emph{O2.} Attach a B12-tree to $w$ so that the terminal edge has label 1.
    \item \emph{O1*.} On the other hand, let $\Gamma_1$ and $\Gamma_2$ be two disjoint trivalent graphs and let $w_i$ be a white vertex of $\Gamma_i$ ($i = 1,2$). Attach a B111-tree to $\Gamma_1 \cup \Gamma_2$ so that $w_1$ and $w_2$ are identified with two distinct white vertices of the B111-tree. 
\end{enumerate}
\end{definition}

To get all the trivalent graphs with $n$ white vertices, we have implemented a program that has two fundamental parts. The first part constructs all the trivalent graphs with $i$ white vertices ($2\leq i \leq n$) and the second part reduces the list by eliminating the repetitions. 

\begin{algorithm}\label{alg:Constructor}
\caption{Construct\_TG($m$:integer)}
\begin{algorithmic}
    \STATE Create $Complete\_list$ a list with $m-1$ empty lists;
    \STATE Set the B12-tree as the first element of $Complete\_list[0]$; 
    \STATE Set the B111-tree as the first element of $Complete\_list[1]$; 
    \FORALL{$w$ white vertex of B12-tree}
        \STATE Set $\Gamma$ as the rif the graph is already there. Here is when we have to use the Characterizing string from ing O2 to $w$;
        \STATE Add $\Gamma$ to $Complete\_list[1]$;
    \ENDFOR
    \FOR{$n$ in $[4, n]$}
        \FORALL{$q$ graph in $Complete\_list[n-3]$}
            \FORALL{$w$ white vertex of $g$}
                \STATE Set $\Gamma$ as the result of applying O2 to $g$ in the vertex $w$.
                \STATE Add $\Gamma$ to $Complete\_list[n-1]$ if it not already there.
            \ENDFOR
        \ENDFOR
        
        \FORALL{$g$ graph in $Complete\_list[n-4]$}
            \FORALL{$w$ white vertex of $g$}
                \STATE Set $\Gamma$ as the result of applying O1 to $g$ in the vertex $w$.
                \STATE Add $\Gamma$ to $Complete\_list[n-1]$ if it not already there.
            \ENDFOR
        \ENDFOR
        \FOR{$i$ in $[0, n-1]$}
            \IF{$n-i-5 \geq 0$}
                \FORALL{pair $(u, v)$ where $u$ is a white vertex of a graph in $Complete\_list[i]$, $v$ is a white vertex of a graph in $Complete\_list[n-i-5]$}
                    \STATE Set $\Gamma$ as the output of applying $O1^*$ using the vertices $u, v$;
                    \STATE Add $\Gamma$ to $Complete\_list[n-1]$ if it not already there.
                \ENDFOR
            \ENDIF
        \ENDFOR
    \ENDFOR
\end{algorithmic}
\end{algorithm}

We should mention that whenever we add a graph to $Complete\_list[n-1]$ we have to check if the graph is already there. Here is when we have to use the string representation of its elements (see Def. \ref{def:string-representation-of-a-tree}) to compare them. Even more, we can set the string representation as a hashing function, so we don't have to iterate over all the elements of $Complete\_list[n-1]$ to decide if it already there or not, and decide it in amortized constant time. The complete implementation can be found in the repository \url{https://github.com/MyHerket/TrivalentStratifold} in GitHub.

  As explained before, the creation of a new trivalent graph is an iterative process. And we are going to prove that the previous algorithm creates all the trivalent graphs with $n$ white vertices. \\
    
    \begin{lemma}\label{one_leaf}
    Let $G$ be a trivalent graph, there exists at least one leaf $w$  of $G$ such that the weight of the adjacent edge to $w$ is 1.
     \end{lemma}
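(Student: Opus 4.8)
The plan is to argue by a case analysis on how $G$ arises, using the generation theorem of \cite{art:Models}: every trivalent graph is either the B111-tree, the B12-tree, or is obtained from strictly smaller trivalent graph(s) by one application of $O1$, $O2$, or $O1^*$. I expect this to be almost immediate, because each of those operations splices a fresh copy of a B111-tree or a B12-tree into the graph, and such a copy always leaves behind a terminal white vertex joined by a weight-$1$ edge. In particular the statement for the smaller pieces is not even needed; the induction degenerates into checking the seed trees and the three operations.

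First I would dispatch the base cases. The B111-tree has three terminal white vertices, each incident to an edge of label $1$, so any one of them is a leaf with a weight-$1$ adjacent edge. The B12-tree has two terminal white vertices, one incident to its label-$1$ edge and one to its label-$2$ edge; the former is the required leaf. Hence the statement holds for both seed trees.

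Next I would treat the inductive step, one operation at a time. For $O2$: by definition the B12-tree is attached to $w$ ``so that the terminal edge has label $1$'', i.e.\ the resulting graph acquires a terminal white vertex joined by an edge of label $1$ — exactly what we need. For $O1$: a B111-tree is attached by identifying one of its three white vertices with $w$, and the displaced black vertices are reattached to a second white vertex of that B111-tree; the third white vertex of the B111-tree is then a brand-new vertex adjacent only to the B111-black vertex via an edge of label $1$, hence a leaf incident to a weight-$1$ edge. For $O1^*$: a B111-tree is attached by identifying two of its white vertices with $w_1$ and $w_2$; the remaining third white vertex is again a new leaf incident to a weight-$1$ edge. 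In every case $G$ has the desired leaf, which completes the argument.

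The only point that needs care — and the place where one must read the definitions of the operations in \cite{art:Models} closely — is verifying in the $O1$ case that the ``third'' white vertex of the spliced B111-tree really is left terminal: one has to check that the operation identifies only one white vertex of the B111-tree with $w$ and reattaches the displaced black vertices to at most one further white vertex, so that a white vertex of the B111-tree always survives as a leaf carrying its original weight-$1$ edge. Once this bookkeeping is confirmed, the proof is finished.
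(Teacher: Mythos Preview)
Your proof is correct and follows essentially the same approach as the paper's: both argue via the generation theorem from \cite{art:Models}, checking the two seed trees and then verifying that each of $O1$, $O2$, $O1^*$ introduces a fresh terminal white vertex incident to a label-$1$ edge, so that the inductive hypothesis is never actually invoked. Your treatment of the $O1$ case is in fact slightly more careful than the paper's, which asserts that two white vertices of the spliced B111-tree remain free (true only when $k=m$), whereas you correctly observe that in general just the third white vertex is guaranteed to survive as a leaf.
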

     \begin{proof}
    We are going to proceed by induction. First, notice that for $b12-$ and $b111-tree$ there exists $w$ a leaf such that the weight of the adjacent edge to $w$ is 1.\\
    Let $G$ be a trivalent graph, if we perform $O1$ in one of its vertices, we are attaching a $b111-tree$ by one of its white vertices, letting 2 white vertices free that are going to be leaves of $G$ with such that the weight of the adjacent edges to them is 1.\\
    On the other hand, if we perform $O2$ in one of $G$'s vertices, we are attaching a $b12-tree$ to it by the only white vertex whose adjacent edge weight is 2, therefore the white vertex whose adjacent edge weight is 1, is now a leaf of the new graph\\
    Finally, if we perform $O1^*$ to $G$ and other graph, by definition we take a $b111-tree$ and attach one white vertex to $G$, one to the other graph and the last one is free, which is the leaf whose adjacent edge weight is 1.\\
    Since the process to get any trivalent graph is taking $b12$ or $b111$ and performing $O1, O2$ or $O1^*$, as many times as we want in each step we have a leaf whose adjacent edge weight is 1, therefore the resulting trivalent graph has it.
     \end{proof}
     \begin{remark}
    Let $G$ be a trivalent graph with $k$ white vertices, the resulting trivalent graph $G'$ after performing $O1$ in any white vertex of $G$ has $k+2$ white vertices.
    \end{remark}
    \begin{remark}
    Let $G$ be a trivalent graph with $k$ white vertices, the resulting trivalent graph $G'$ after performing $O2$ in any white vertex of $G$ has $k+1$ white vertices.
    \end{remark}
    \begin{remark}
    Let $G$ and $G'$ be trivalent graphs with $k$ and $j$ white vertices, respectively. The resulting graph $H$ after performing $O1^*$ in any pair of white vertices of $G$ and $G'$ has $k+j+1$ white nodes.
    \end{remark}
    Using the previous remarks we have the following theorem:
    \begin{theorem}\label{algorithm}
    For any $n$ an integer greater than 3. The list of all the trivalent graphs with $n$ white vertices, (including isomorphisms) will be obtained by performing the operation $O1$ to all the trivalent graphs with $n-2$ white vertices in each of their white vertices, performing the operation $O2$ to all the trivalent graphs with $n-1$ white nodes in each of their vertices and finally for every $m$, such that $2\leq m \leq n-3$ perform the operation $O1^*$ in all the pairs of white vertices of every pair of trivalent graphs where the first one is an element of the list of trivalent graphs with $m$ white vertices and the second one is an element of the list of trivalent graphs with $n-m-1$ white vertices.
    \end{theorem}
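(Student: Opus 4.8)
The plan is to read the asserted equality as an equality between the collection of all trivalent graphs with $n$ white vertices (up to isomorphism) and the collection of graphs produced by the described procedure, and to prove the two inclusions separately.

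One inclusion is immediate from the three Remarks preceding the statement together with the fact that, by definition in \cite{art:Models}, each of $O1$, $O2$, $O1^*$ sends trivalent graphs to trivalent graphs: applying $O1$ to a trivalent graph with $n-2$ white vertices produces one with $n$; applying $O2$ to one with $n-1$ produces one with $n$; and applying $O1^*$ to trivalent graphs with $m$ and $n-m-1$ white vertices produces one with $m+(n-m-1)+1=n$. So every graph the procedure outputs is a trivalent graph with exactly $n$ white vertices.

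For the other, substantive inclusion I would start from an arbitrary trivalent graph $G$ with $n$ white vertices and invoke the result of \cite{art:Models} that $G$ is obtained from a $B111$- or $B12$-tree by a finite sequence of the operations $O1$, $O2$, $O1^*$. Since $n>3$ while the $B111$- and $B12$-trees have $3$ and $2$ white vertices respectively, this sequence contains at least one operation; let $\mathcal{O}$ be the last one and split into cases. If $\mathcal{O}$ is $O1$ applied to some trivalent graph $G'$, the first Remark forces $G'$ to have $n-2$ white vertices, so $G$ is among the graphs the procedure produces when it runs $O1$ over every white vertex of every trivalent graph with $n-2$ white vertices; the case $\mathcal{O}=O2$ is identical with $n-1$ in place of $n-2$. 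If $\mathcal{O}$ is $O1^*$ applied to trivalent graphs $G_1,G_2$ with $m$ and $j$ white vertices, then $m+j+1=n$, i.e.\ $j=n-m-1$; moreover every trivalent graph has at least two white vertices, because the base trees do and each of $O1,O2,O1^*$ strictly increases the white-vertex count by the Remarks, so $m\ge 2$ and $j\ge 2$, which is exactly $2\le m\le n-3$. Hence $G$ appears in the $O1^*$ part of the procedure. In every case $G$ is isomorphic to an output, which establishes the inclusion and the theorem.

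The argument is therefore a short case analysis on the final operation, and I expect the only points needing care to be the bookkeeping with white-vertex counts — in particular checking that the feasible range of the $O1^*$ parameter is precisely $2\le m\le n-3$, which relies on the observation that no trivalent graph has fewer than two white vertices — and noting that the decomposition borrowed from \cite{art:Models} terminates at the base trees, so that ``the last operation'' is well defined whenever $n>3$. The statement only claims completeness up to isomorphism (``including isomorphisms''); removing duplicates is the job of the string representation of Section 4, so nothing about minimality or irredundancy has to be proved here. Combined with an induction on $n$, whose base cases $n=2,3$ are supplied directly by the first lines of Algorithm \ref{alg:Constructor}, this theorem is what yields correctness of the generator.
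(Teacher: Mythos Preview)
Your argument is correct, but it takes a genuinely different route from the paper's own proof. You invoke the generation theorem from \cite{art:Models} as a black box and then do pure bookkeeping on the \emph{last} operation in a generating sequence, using the three Remarks to pin down the white-vertex counts and the lower bound $\ge 2$ to nail the range $2\le m\le n-3$. The paper instead argues directly from the structure of $G$: it uses Lemma~\ref{one_leaf} to pick a leaf $w$ whose incident edge has label $1$, looks at the black neighbour $b$, and according to whether $b$ has degree $2$ or $3$ (and, in the degree-$3$ case, whether one of the other two neighbours of $b$ is itself a leaf) explicitly deletes a small configuration to produce a smaller trivalent graph (or a pair of them) on which a single application of $O2$, $O1$, or $O1^*$ recovers $G$.

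What each approach buys: yours is shorter and avoids any local graph surgery, but it leans on the full force of the \cite{art:Models} result and on the notion of ``last operation'' in what is really a tree of operations (this is harmless, since the root of that tree is still well defined, but it is worth saying). The paper's proof is more self-contained --- it essentially re-derives the needed instance of the \cite{art:Models} result by exhibiting the inverse operation concretely --- and it explains \emph{why} Lemma~\ref{one_leaf} was proved, which your argument never needs. Your observation that every trivalent graph has at least two white vertices (hence $2\le m\le n-3$) is a point the paper's proof glosses over, so in that respect your write-up is actually a bit more careful.
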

    \begin{proof}
    Let $n$ be an integer greater than 3. By the remarks, it is clear that performing the algorithm described will give us a subset of the list of all the trivalent graphs with $n$ white vertices. Let's see that this subset is the total set.\\
    Let $G$ be a trivalent graph with $n > 3$ white vertices and $w$ a leaf of $G$ such that the edge adjacent to $w$ has weight 1, it exists by lemma \ref{one_leaf}. We know that $w$ is a white vertex, by theorem 1 in \cite{art:Models}. 
    Let $b$ be the black node adjacent to $w$. If $b$ has degree 2, let $v$ be the other vertex adjacent to $b$, when we erase the vertices $w, b$ we get a new graph $G'$ with $n-1$ white vertices such that after performing $O2$ in $G'$ in the vertex $v$ we get $G$.\\
    If $b$ has degree 3, then $b$ is part of a $b111-sub tree$, and let $v_1$ and $v_2$ the vertices adjacent to $b$ different to $w$. If there is $v_i$ ($ i \in {1, 2}$) such that its degree is one, suppose $v_1$, when we erase the vertices $w, b, v_1$, we get a new trivalent graph $G'$ with $n-2$ white vertices such that after performing $O1$ in $v_2$ we get the original trivalent graph $G$. On the other hand when neither $v_1$ nor $v_2$ has degree 1, when we erase the vertices $b, w$ we get two trivalent graphs $H$ and $H'$ such that the sum of their white vertices is $n-1$ and after performing $O1^*$ in $v_1, v_2$ we get the original graph $G$.\\
    Therefore every trivalent graph of the list was a result of a step in the algorithm described in the theorem and the algorithm gives us all the graphs, including isomorphisms.\\
    \end{proof}
  
\begin{table}[ht]
    \centering
     \begin{tabular}{||c| c | c||} 
     \hline
     $n$ & Total & Created \\ [0.5ex] 
     \hline\hline
    2 &    1      & 1\\
    3 &    3      & 3\\
    4 &    6      & 11\\
    5 &    18	  & 37\\
    6 &    51	  & 150\\
    7 &    167	  & 573\\
    8 &    551    & 2267\\
    9 &    1954	  & 8997\\
    10&    7066	  & 36498\\
    11&    26486  & 149708\\
     [1ex] 
     \hline
    \end{tabular}
    \caption{Number of distinct graphs we got for each value $n$ (the number of white vertices), and the number of graphs that were created to construct them all.}
    \label{tab:my_label}
\end{table}

This process is exhaustive, we can assure that it creates all the trivalent graphs with $n$ white vertices, but it creates too many repetitions. This is because there are symmetries in the rooted trivalent graphs, and when applying any operation in two symmetric vertices the resulting trivalent graph is the same.\\

\begin{definition}\label{def:symmetry}
Given $G$ a rooted trivalent graph, we say that two vertices $u, v \in G$ are \emph{symmetric} if there exists an automorphism $\phi: G \to G$ (as rooted weighted graphs) such that $\phi(u) = v$.
\end{definition}

It is clear that if two vertices are symmetric, they must have the same string representation because the string representation of a vertex $v$ depends solely on the isomorphisms class of the subtree defined by the descendants of $v$. Unfortunately, having the same string representation is not enough to recognize symmetric vertices. The fathers of two symmetrical vertices $u$ and $v$ must be symmetrical as well ( an automorphism $\phi: G \to G$ must send the father of $u$ to the father of $v$). This implies, that the process of detecting symmetrical vertices can be iterated recursively, and it will end when the fathers of two vertices coincide (when they are siblings). 

We use the above idea for detecting symmetric white vertices of a graph $G$. And we modified algorithm \ref{alg:Constructor} to work only with symmetrically distinct white vertices.  These changes reduced the number of generated graphs by around $20\%$ as shown in Table \ref{tab:created-graphs-after-symmetry}.

\begin{table}[ht]
    \centering
     \begin{tabular}{||c| c | c||} 
     \hline
     $n$ & Created & Reduction \\ [0.5ex] 
     \hline\hline
    4 &  11     &  0,00\% \\
    5 & 32      &  13,51\% \\
    6 & 122     &  18,67\% \\
    7 & 467     &  18,50\% \\
    8 &  1781   &  21,44\% \\
    9  & 7099   &  21,10\% \\
    10 & 28852  &  20,95\% \\
    11 & 119168 &  20,40\% \\
     [1ex] 
     \hline
    \end{tabular}
    \caption{Number of created graphs after considering symmetrically distinct white vertices.}
    \label{tab:created-graphs-after-symmetry}
\end{table}

\begin{center}

\end{center}

More optimization could've been done to avoid so many repetitions, but the program would've run in exponential time anyway. Because the number of graphs we want to construct has exponential growth.\\

\section{Nomenclature}\label{sec:labels}
    We need a nomenclature to differentiate the trivalent graphs. For $G$ a trivalent graph, the tag is going to be the identifier of $G$. To have a general idea of the shape of the graph it is necessary to include the number of leaves, black and white nodes. Also, we will include the length of the largest and shortest leaf paths of $G$. And an ID number that identifies $G$ as a unique graph.\\
    Denote $W(G), B(G), L(G)$ the sets of white vertices, black vertices and leaves of $G$, respectively.\\
    The tag will have the following structure:
    \begin{center}
        $tag(G)$ = [ \textit{$|W(G)|$, $|B(G)|$, $|L(G)|$, length(shortest leaf path), length(largest leaf path), ID number}]
    \end{center}
    The $ID$ number is automatically generated by the program when using the Hash table that uses the unique string representation of $G$ to order it in the list of trivalent graphs. 

\bibliographystyle{plain}
\bibliography{ref}

\begin{thebibliography}{1}

\bibitem{cod:TSR}
Trivalent stratifold repository.

\bibitem{book:AHUBook}
Alfred~V. Aho, John~E. Hopcroft, and Jeffrey~D. Ullman.
\newblock {\em The design and analysis of computer algorithms}.
\newblock Addison-Wesley Publishing Co., Reading, Mass.-London-Amsterdam, 1975.
\newblock Second printing, Addison-Wesley Series in Computer Science and
  Information Processing.

\bibitem{art:AHUalgorithm}
Douglas~M. Campbell and David Radford.
\newblock Tree isomorphism algorithms: speed vs. clarity.
\newblock {\em Math. Mag.}, 64(4):252--261, 1991.

\bibitem{book:GraphT}
Gary Chartrand and Ping Zhang.
\newblock {\em Chromatic graph theory}.
\newblock Discrete Mathematics and its Applications (Boca Raton). CRC Press,
  Boca Raton, FL, 2009.

\bibitem{art:2-stratifold}
J.~C. G\'{o}mez-Larra\~{n}aga, F.~Gonz\'{a}lez-Acu\~{n}a, and Wolfgang Heil.
\newblock 2-dimensional stratifolds.
\newblock In {\em A mathematical tribute to {P}rofessor {J}os\'{e} {M}ar\'{\i}a
  {M}ontesinos {A}milibia}, pages 395--405. Dep. Geom. Topol. Fac. Cien. Mat.
  UCM, Madrid, 2016.

\bibitem{art:class}
J.~C. Gomez-Larra\~{n}aga, F.~Gonz\'{a}lez-Acu\~{n}a, and Wolfgang Heil.
\newblock Classification of simply-connected trivalent 2-dimensional
  stratifolds.
\newblock {\em Topology Proc.}, 52:329--340, 2018.

\bibitem{art:Models}
J.~C. Gomez-Larra\~{n}aga, F.~Gonz\'{a}lez-Acu\~{n}a, and Wolfgang Heil.
\newblock Models of simply-connected trivalent 2-dimensional stratifolds.
\newblock {\em Bol. Soc. Mat. Mex.}, 26:1301–--1312, 2020.

\bibitem{StratModule}
Hernández Yair.
\newblock Stratifolds.
\newblock https://github.com/yair-hdz/stratifolds, 2018.

\end{thebibliography}

\end{document}